\newtheorem{theorem}{Theorem}[section]
\DeclareMathOperator{\degree}{deg}
\DeclareMathOperator{\different}{{\mathfrak d}}
\DeclareMathOperator{\divisor}{div}
\newcommand{\BF}{{\mathbb{F}}}
\renewcommand{\le}{\leqslant}
\renewcommand{\ge}{\geqslant}
\newcommand{\p}{\phantom{0}} 
\newcommand{\Drinfeld}{Drinfel'd}
\newcommand{\Vladut}{Vl\u adu\c t}
\begin{document}

\title[The maximum number of points on a curve of genus eight]
{The maximum number of points on a curve of genus eight\\ over the field of four elements}

\author{Everett W.~Howe}
\address{Unaffiliated mathematician, San Diego, CA, USA}
\urladdr{\url{http://ewhowe.com}}
\email{\href{mailto:"Everett W. Howe" <however@alumni.caltech.edu>}{however@alumni.caltech.edu}}

\date{14 August 2020}
\keywords{Curve, Jacobian, Weil polynomial, points}

\subjclass[2020]{Primary 11G20; Secondary 14G05, 14G10, 14G15}
%%%%%%%%%%%%%%%%%%%%%%%%%%%%%%%%%%%%%%%%%%%%%%%%%%%%%%%%%%%%%%%%%%%%%%%%%%%%%
%%                                                                         %%
%% (2020 Mathematics Subject Classification)                               %%
%%                                                                         %%
%% 11            Number theory                                             %%
%%    11G           Arithmetic algebraic geometry (Diophantine geometry)   %%
%%        11G20        Curves over finite and local fields                 %%
%% 14            Algebraic Geometry                                        %%
%%    14G           Arithmetic problems. Diophantine geometry              %%
%%        14G05        Rational points                                     %%
%%        14G10        Zeta-functions and related questions                %%
%%        14G15        Finite ground fields                                %%
%%                                                                         %%
%%%%%%%%%%%%%%%%%%%%%%%%%%%%%%%%%%%%%%%%%%%%%%%%%%%%%%%%%%%%%%%%%%%%%%%%%%%%%

\begin{abstract}
The Oesterl\'e bound shows that a curve of genus $8$ over the finite field $\BF_4$ 
can have at most $24$ rational points, and Niederreiter and Xing used class
field theory to show that there exists such a curve with $21$ points. We improve
both of these results: We show that a genus-$8$ curve over $\BF_4$ can have at 
most $23$ rational points, and we provide an example of such a curve with $22$ 
points, namely the curve defined by the two equations 
$y^2 + (x^3 + x + 1)y = x^6 + x^5 + x^4 + x^2$ and $z^3 = (x+1)y + x^2.$
\end{abstract}

\maketitle

%%%%%%%%%%%%%%%%%%%%%%%%%%%%%%%%%%%%%%%%%%%%%%%%%%%%%%%%%%%%%%%%%%%%%%%%%%%%%%%%
%%%%%%%%%%%%%%%%%%%%%%%%%%%%%%%%%%%%%%%%%%%%%%%%%%%%%%%%%%%%%%%%%%%%%%%%%%%%%%%%
\section{Introduction}

For the past several decades --- beginning with the work of 
Ihara~\cite{Ihara1981}, Manin~\cite{Manin1981}, and \Drinfeld\ and 
\Vladut~\cite{VladutDrinfeld1983r} in the early 1980s --- there has been much
research on the behavior of the quantity $N_q(g)$, the maximum number of 
rational points on a curve\footnote{
  Here, and throughout the paper, by a \emph{curve} over a field $k$
  we mean a smooth, projective, geometrically irreducible variety over $k$
  of dimension~$1$.}
of genus $g$ over the finite field~$\BF_q$. Some of
this research is focused on the asymptotic behavior of $N_q(g)$, where one of
$q$ and $g$ is fixed and the other tends to infinity, while other research is
more concerned with the computation of the actual values of $N_q(g)$ for small 
$q$ and~$g$. These actual values provide a source of data that can inspire and 
test conjectures about the behavior of $N_q(g)$, and curves whose point counts 
attain these values can be used to create efficient error-correcting codes. 
In short, determining the value of $N_q(g)$ for specific $q$ and $g$ is an
attractive and challenging mathematical problem.

The web site \href{http://manypoints.org}{manypoint.org} \cite{manypoints} keeps
track of the known upper and lower bounds for $N_q(g)$, for $g\le 50$ and for
$q$ ranging over the primes less than $100$, the prime powers $p^i$ for odd 
$p\le 19$ and $i\le 5$, and the powers of $2$ up to $2^7$. The extent of the
research activity centered around the study of the function $N_q(g)$ is 
illustrated by the fact that as of this writing, there are $115$ references
cited on manypoints.org, involving $81$ authors. For most values of $q$ and $g$ 
in the given ranges, the exact value of $N_q(g)$ is not known, but for small $q$
there has been more focused study and searching. The exact value of $N_2(g)$ is 
known for all $g\le 11$, and $N_2(12)$ is equal to either $14$ or $15$; the
exact value of $N_3(g)$ is known for $g\le 7$, and $N_3(8)$ is either $17$ 
or~$18$; and the exact value of $N_4(g)$ is known for $g\le 7$, while for $g=8$
all we know is that $21\le N_4(8)\le 24$. The upper bound $N_4(8)\le 24$ comes 
from Oesterl\'e's ``explicit formula'' method, described 
in~\cite[p. SeTh 32 ff.]{Serre:notes} and~\cite[Chapter VI]{Serre2020}. The 
lower bound $N_4(8)\ge 21$ comes from a construction of Niederreiter and 
Xing~\cite{NiederreiterXing1997}, who use class field theory to produce a
genus-$8$ curve over $\BF_4$ with $21$ points that is an unramified degree-$7$
cover of a genus-$2$ curve.

The purpose of this note is to improve the upper and lower bounds on $N_4(8)$. 

\begin{theorem}
\label{T:intro}
We have $22\le N_4(8)\le 23$. 
\end{theorem}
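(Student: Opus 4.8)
The plan is to prove the two inequalities by completely different methods: the lower bound by exhibiting and analyzing the curve named in the abstract, and the upper bound by a case analysis over Weil polynomials.

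For the lower bound $N_4(8)\ge 22$, I would verify directly that the curve $C$ defined by
\[
y^2 + (x^3+x+1)y = x^6+x^5+x^4+x^2, \qquad z^3 = (x+1)y + x^2
\]
has genus $8$ and exactly $22$ rational points over $\BF_4$. First I would check that the first equation defines a genus-$2$ curve $H$ over $\BF_4$ (an Artin--Schreier-type cover of the $x$-line), and that $C\to H$ is a tamely ramified degree-$3$ Kummer cover, since $3$ is invertible in characteristic $2$; computing the divisor of $(x+1)y+x^2$ on $H$ and applying Riemann--Hurwitz then pins down $g(C)=8$. The point count would follow either by direct enumeration over $\BF_4$ --- tracking, for each rational place of $H$, how many of the three points above it are rational on $C$ --- or, more systematically, by computing the zeta function of $C$ from those of $H$ together with its cubic twists and reading off $\#C(\BF_4)$.

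For the upper bound $N_4(8)\le 23$, I would argue by contradiction: suppose a genus-$8$ curve $C/\BF_4$ attains the Oesterl\'e-maximal $24$ points, so that only this single value remains to be excluded. Writing the real Weil polynomial $h(x)=\prod_{i=1}^{8}(x-\lambda_i)$ with $\lambda_i = \alpha_i+\bar\alpha_i\in[-4,4]$, the point counts are $N_r = 4^r+1-\sum_i(\alpha_i^r+\bar\alpha_i^r)$, and the requirement $N_1=24$ fixes $\sum_i\lambda_i=-19$. The first step is to apply the explicit-formula (linear programming) inequalities with well-chosen test functions --- the same machinery that produces the Oesterl\'e bound --- to reduce the possibilities for $h$ to a short, explicit list of real Weil polynomials. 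The second step is to discard candidates on this list by arithmetic: each $h$ determines all the $N_r$, hence the number of closed points of each degree $d$, and I would eliminate any $h$ for which some such count fails to be a non-negative integer.

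The candidates that survive these numerical tests are where the real work lies, and this is the step I expect to be the main obstacle. For each survivor I would apply the Howe--Lauter obstruction: if the factorization of $h$ forces $\operatorname{Jac}(C)$ to be isogenous to a product $A\times B$ for which $\operatorname{Hom}(A,B)$ is so small that every principal polarization on the product is decomposable, then no such Jacobian can exist, since the canonical polarization of a curve is indecomposable. Candidates not killed by this resultant argument would require ad hoc treatment, typically extracting from the low-degree point data a forced map to a curve of smaller genus with an impossible point count, or directly showing that the relevant isogeny class contains no Jacobian. Assembling enough such obstructions to eliminate every candidate, and thereby forcing $N_4(8)\le 23$, is the delicate heart of the argument.
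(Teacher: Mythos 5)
Your lower-bound argument is essentially the paper's proof. The paper checks that $\divisor\bigl((x+1)y+x^2\bigr) = 6P_0 + P_1 + P_2 - 4Q_1 - 4Q_2$ on the genus-$2$ curve, gets genus $8$ from Riemann--Hurwitz (the four points $P_1$, $P_2$, $Q_1$, $Q_2$ ramify totally and tamely, since the Kummer cover of degree $3$ is tame in characteristic $2$, as you note), and obtains $6\cdot 3 + 4 = 22$ points by observing that the function is $1$, hence a cube, at five rational points and is locally a cube at $P_0$. No issues there.

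The upper bound is where you have a genuine gap. Your framework --- enumerate real Weil polynomials consistent with $N_1 = 24$ and non-negative place counts, then apply the Howe--Lauter obstructions --- matches the paper's first half: Lauter's algorithm yields $26$ candidate isogeny classes, and $25$ of them fall to the resultant-$1$, resultant-$2$ (gluing exponent), and supersingular-factor arguments. But you defer the survivors to unspecified ``ad hoc treatment,'' and the one surviving class is precisely where the paper's real work lies; moreover, your proposed fallback (``a forced map to a curve of smaller genus with an impossible point count'') fails as stated for this case. The surviving class has real Weil polynomial $h = (x+3)\cdot x(x+2)^4(x+4)^2$, and the resultant-$\pm 3$ factorization forces a degree-$3$ map $\varphi\colon C\to E$ to the elliptic curve over $\BF_4$ with $8$ points --- but this cover is \emph{not} immediately impossible on point-count grounds, since the $8$ rational points of $E$ splitting completely account exactly for the $24$ points of $C$. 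The paper must work much harder: it first shows $\varphi$ cannot be Galois (a Galois cover would force a single ramified place of degree $7$ on $E$, making the number of degree-$7$ places of $C$ congruent to $1$ modulo $3$, whereas $h$ gives $2496$ such places); it then passes to the $S_3$ Galois closure and its quadratic resolvent field $L = k(F)$, shows the unique ramified place has degree $7$ with inertia group $C_2$ (inertia $C_3$ would make the Galois closure a genus-$15$ curve with $48$ points, violating the Oesterl\'e bound of $37$), deduces that $F$ is a genus-$8$ double cover of $E$ with exactly $16$ degree-$1$, $8$ degree-$2$, and $32$ degree-$3$ places, enumerates the $44$ real Weil polynomials compatible with those counts, and checks that none is divisible by $x+3$, contradicting the existence of the map $F\to E$. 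Without this analysis (or some equivalent), your argument establishes only that a $24$-point curve's Jacobian lies in one specific isogeny class, not that no such curve exists.
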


To prove the lower bound, we exhibit a genus-$8$ curve over $\BF_2$ that has
$22$ points over $\BF_4$; the curve is constructed as a degree-$3$ Kummer 
extension of a genus-$2$ curve. For the upper bound, we show that there are $26$
isogeny classes of $8$-dimensional abelian varieties over $\BF_4$ that might
possibly contain the Jacobian of a genus-$8$ curve having $24$ rational points.
Using techniques and programs from~\cite{HoweLauter2012} we show that $25$ of 
these isogeny classes do not contain Jacobians. For the final isogeny class, the
techniques of~\cite{HoweLauter2012} show that any curve whose Jacobian lies in
the isogeny class must be a triple cover of the unique elliptic curve $E$ over
$\BF_4$ with $8$ points. Using methods related to ones used by 
Rigato~\cite[\S4]{Rigato2010}, we are able to show that no such triple cover can
exist.

Our result leaves open the question of the exact value of $N_4(8)$. We have been
unable to construct a genus-$8$ curve over $\BF_4$ having $23$ rational points.
On the other hand, if we try to use the techniques of~\cite{HoweLauter2012} to 
show that there are no genus-$8$ curves with $23$ points, we are left to 
consider $98$ isogeny classes of abelian $8$-folds that are not eliminated by
other means, a daunting prospect. Finding the exact value of $N_4(8)$ will 
likely require significant new techniques.

\section*{Acknowledgments}
The author thanks Jeroen Sijsling for his careful reading of the first version
of this paper, and for his suggestions for improving the exposition.

%%%%%%%%%%%%%%%%%%%%%%%%%%%%%%%%%%%%%%%%%%%%%%%%%%%%%%%%%%%%%%%%%%%%%%%%%%%%%%%%
\section{Improving the upper bound}
\label{S:HL}

In this section we prove the following result:

\begin{theorem}
\label{T:upper}
There is no genus-$8$ curve over $\BF_4$ that has $24$ rational points.
\end{theorem}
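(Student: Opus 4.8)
The plan is to recast the hypothesis in terms of Weil polynomials, reduce to a finite list of candidate isogeny classes, dispatch most of them with the obstruction machinery of~\cite{HoweLauter2012}, and handle the single surviving class by a direct analysis of covers in the spirit of~\cite[\S4]{Rigato2010}. First I would record the numerics. If $C$ is such a curve, its Jacobian is an $8$-dimensional abelian variety over $\BF_4$ whose Frobenius eigenvalues come in conjugate pairs $\alpha_i,\bar\alpha_i$ with $|\alpha_i|=2$; writing $x_i=\alpha_i+\bar\alpha_i\in[-4,4]$ for the roots of the real Weil polynomial, the condition $\#C(\BF_4)=24$ becomes $\sum_{i=1}^8 x_i=-19$. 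The point counts over extensions are then fixed power sums of the $x_i$; for instance $\#C(\BF_{16})=81-\sum_i x_i^2$, and the inequality $\#C(\BF_{16})\ge\#C(\BF_4)$ forces $\sum_i x_i^2\le 57$. Imposing the Weil bounds, the non-negativity of the number of closed points of each degree, and Serre's explicit-formula inequalities --- the same optimization that yields the Oesterl\'e bound $N_4(8)\le 24$, cf.~\cite{Serre2020} --- cuts the possibilities down to the finite, explicit list of $26$ isogeny classes mentioned in the introduction.

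Next I would feed these candidates into the methods of~\cite{HoweLauter2012}. For each class, if the putative Jacobian is forced to be isogenous to a product $A\times B$ whose factors are incompatibly ``glued'' --- detected by a reduced-resultant computation, or by the resulting upper bounds on the number of points of any curve with such a Jacobian --- then the class contains no Jacobian and may be discarded. I expect this to eliminate all but one of the classes, matching the count of $25$. The surviving class is the delicate one: its real Weil polynomial contains the factor $x+3$ attached to the unique elliptic curve $E/\BF_4$ with $\#E(\BF_4)=8$ (Frobenius trace $-3$) to high multiplicity, and the Howe--Lauter analysis shows that any curve $C$ in this class must admit a degree-$3$ map $\phi\colon C\to E$.

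At this point the rigidity of the point count takes over. Because $\phi$ is defined over $\BF_4$ it sends $C(\BF_4)$ into $E(\BF_4)$, so $24=\#C(\BF_4)\le 3\,\#E(\BF_4)=24$; equality forces each of the $8$ rational points of $E$ to have exactly three distinct rational preimages, i.e.\ $\phi$ is unramified and \emph{completely split} over all of $E(\BF_4)$. Riemann--Hurwitz, $2g_C-2=3(2g_E-2)+\deg R$, then gives $\deg R=14$, placing all the ramification over non-rational points of $E$. Following Rigato, I would make the cover explicit: since $\mu_3\subset\BF_4$, a cyclic such cover is a Kummer cover $z^3=f$ for a function $f$ on $E$, and complete splitting over a rational point $P$ forces $f(P)$ to be a nonzero cube in $\BF_4$, hence $f(P)=1$, while the ramification is governed by $\divisor(f)\bmod 3$ being supported on a degree-$14$ set of non-rational points. (A non-Galois degree-$3$ cover is treated by passing to its $S_3$-Galois closure and tracking the same data.) The main obstacle, and the crux of the argument, is to prove that these constraints are incompatible: eight evaluation conditions $f(P)=1$ together with the prescribed, very small ramification leave no room for such an $f$. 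Establishing this impossibility shows that no such triple cover, and hence no such curve, exists, completing the proof.
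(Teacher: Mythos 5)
Your first half tracks the paper closely and is essentially correct: the reduction to finitely many isogeny classes via non-negative place counts, the elimination of $25$ of the $26$ classes by the resultant and gluing-exponent machinery of~\cite{HoweLauter2012}, the forced degree-$3$ map $\varphi\colon C\to E$ to the elliptic curve with $8$ points coming from the factorization $h = (x+3)\cdot x(x+2)^4(x+4)^2$ with resultant $\pm 3$, the observation that $24 = 3\cdot 8$ forces every rational point of $E$ to split completely, and the Riemann--Hurwitz computation $\degree\different_C = 14$ all match the paper (your numerics $\sum x_i = -19$ and $\sum x_i^2 \le 57$ are also correct). One local error: the \emph{different} has degree $14$, but since the relevant ramification contributes at least $2\deg P$ per branch place, the branch locus has degree at most $7$, not $14$; moreover, you have not used the fact (read off from $h$) that $C$ has no places of degree $2$ or $3$, which is what pins the branch locus down to a \emph{single place of degree~$7$} --- an essential input to what follows.

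The genuine gap is the final step, and you acknowledge it yourself: you state that the eight evaluation conditions $f(P)=1$ plus the small ramification ``leave no room for such an $f$,'' but you give no argument, and it is far from clear that any direct function-space or Riemann--Roch dimension count proves this. The paper resolves the two cases by quite different, and genuinely nontrivial, means. For your cyclic Kummer case $z^3 = f$, the paper does not argue with $f$ at all: it observes that in a Galois triple cover ramified only at one degree-$7$ place, every degree-$7$ place of $E$ either splits into three or ramifies, so the number of degree-$7$ places of $C$ is $\equiv 1 \pmod 3$; but $h$ forces $C$ to have $2496 \equiv 0 \pmod 3$ such places --- contradiction, so $\varphi$ is not Galois. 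Your parenthetical ``passing to the $S_3$-closure and tracking the same data'' then conceals the bulk of the remaining work: in the closure $M$, inertia $C_3$ at the degree-$7$ place is excluded because $M$ would be the function field of an unramified genus-$15$ triple cover of $C$ with $48$ rational points, exceeding the Oesterl\'e bound of $37$; inertia $C_2$ forces the quadratic resolvent $L$ to be the function field of a genus-$8$ curve $F$ that is a double cover of $E$ with exactly $16$, $8$, and $32$ places of degrees $1$, $2$, $3$, and a fresh run of Lauter's algorithm with these three constraints produces $44$ candidate real Weil polynomials, none of which is divisible by $x+3$ --- contradicting $F\to E$. Without an argument of comparable substance at this point, your proposal establishes only the reduction, not the theorem.
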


Combined with the Oesterl\'e bound $N_4(8)\le 24$, this theorem gives us the 
upper bound in Theorem~\ref{T:intro}.

\begin{proof}[Proof of Theorem~\textup{\ref{T:upper}}]
To obtain a contradiction, suppose such a curve $C$ exists. We will determine
the real Weil polynomial (defined below) of $C$, and use this polynomial to show
that $C$ must be a triple cover of a certain elliptic curve. Then we will show 
that no such triple cover can exist.

The \emph{Weil polynomial} of an abelian variety over a finite field is the 
characteristic polynomial of its Frobenius endomorphism, and a result of 
Tate~\cite[Theorem 1(c), p.~139]{Tate1966} says that two abelian varieties over
the same finite field $k$ are isogenous to one another (over $k$) if and only if
they have the same Weil polynomial. If $A$ is a $g$-dimensional abelian variety
over $\BF_q$ with Weil polynomial $f$, then the \emph{real Weil polynomial} of 
$A$ is the unique degree-$g$ polynomial $h$ such that $f(x) = x^g h(x + q/x)$. 
We define the Weil polynomial and the real Weil polynomial of a curve over a 
finite field to be the Weil polynomial and the real Weil polynomial of the 
curve's Jacobian, respectively.

Suppose $f$ is the Weil polynomial of an isogeny class of $g$-dimensional 
abelian varieties over $\BF_q$, and let $\pi_1,\ldots,\pi_{2g}$ be the complex
roots of~$f$, listed with multiplicity. For every $n>0$ we define quantities 
$R_n(f)$ and $P_n(f)$ by
\[
R_n(f) = q^n + 1 - \sum_{i=1}^{2g} \pi_i^n \text{\qquad and\qquad}
P_n(f) = \frac{1}{n} \sum_{d\mid n} R_d(f) \, \mu\Bigl(\frac{n}{d}\Bigr),
\]
where $\mu$ is the M\"obius function. If $f$ is the Weil polynomial of a 
curve~$C$, then $R_n(f) = \#C(\BF_{q^n})$ and $P_n(f)$ is equal to the number of
degree-$n$ places on $C$, and so $P_n(f)\ge 0$. This gives us a simple test that
must be satisfied if an isogeny class of abelian varieties contains a Jacobian: 
We simply check to see that $P_n(f)\ge 0$ for all $n$. An isogeny class that 
fails this test does not contain a Jacobian, while an isogeny class that passes
the test may or may not contain a Jacobian.

Lauter~\cite{Lauter2000} sketches an algorithm for enumerating the Weil 
polynomials $f$ for the isogeny classes of $g$-dimensional abelian varieties
over $\BF_q$ with $P_n(f)\ge 0$ for all $n$ and with $P_1(f)$ equal to a given
integer~$N$. The Jacobian of a genus-$g$ curve over $\BF_q$ with $N$ points will
necessarily lie in one of these isogeny classes. The paper~\cite{HoweLauter2012}
and its predecessor~\cite{HoweLauter2003} provide many methods for showing that 
certain isogeny classes that pass this initial test do not contain Jacobians; we
mention three here that are stated in terms of real Weil polynomials. One 
criterion, due to Serre~\cite[p.~Se 11 ff.]{Serre:notes}
\cite[\S II.4]{Serre2020}, says that if the real Weil polynomial $h$ of an 
isogeny class can be written as a product of two nontrivial monic factors $h_1$
and $h_2$ such that the resultant of $h_1$ and $h_2$ is $\pm 1$, then the 
isogeny class does not contain a Jacobian (see also
\cite[Theorem~2.2(a) and Proposition~2.8]{HoweLauter2012}). A second 
criterion~\cite[Theorem~2.2(b) and Proposition~2.8]{HoweLauter2012} says that if
we have a factorization $h = h_1 h_2$ where the resultant of $h_1$ and $h_2$ is
equal to $\pm2$ (or more generally, when the \emph{gluing exponent} of the pair
of isogeny classes associated to $h_1$ and $h_2$ is equal to~$2$), then any 
curve with real Weil polynomial $h$ is a double cover of a curve whose real Weil 
polynomial is either $h_1$ or $h_2$; often, this added information is enough to
show that there can be no curve with real Weil polynomial~$h$. And finally, if
$q$ is a square, say $q = s^2$, and if the real Weil polynomial $h$ of an 
isogeny class can be factored as $h = h_1 h_2$ where $h_1$ is a power of 
$x - 2s$ and where $h_2(2s)$ is squarefree, then there is no Jacobian with real
Weil polynomial~$h$~\cite[Theorem~3.1]{HoweLauter2012}. We will call these three
techniques the \emph{resultant~$1$} argument, the \emph{resultant~$2$} argument,
and the \emph{supersingular factor} argument, respectively.

Using the Magma programs that accompany~\cite{HoweLauter2012}, which may be
found at 

\smallskip

\centerline{\href{http://ewhowe.com/papers/paper35.html}{\texttt{http://ewhowe.com/papers/paper35.html}},}

\smallskip
\noindent
we can analyze the possible isogeny classes of abelian $8$-folds over $\BF_4$ 
that meet the ``non-negative place count'' test and that might contain the 
Jacobian of a genus-$8$ curve over $\BF_4$ with $24$ rational points. There are
$26$ such isogeny classes, which we enumerate in Table~\ref{Table:26}.

\begin{sidewaystable}
\caption{
   The real Weil polynomials of isogeny classes of abelian $8$-folds over 
   $\BF_4$ that might contain the Jacobian of a genus-$8$ curve having $24$
   points. All but the first of the $26$ polynomials can be eliminated from
   consideration by arguments discussed in the text: the ``resultant $1$'' 
   argument, the ``resultant $2$'' argument, and the ``supersingular factor''
   argument.
}
\label{Table:26}
\centering
\renewcommand{\arraystretch}{1.2}
\scriptsize
\begin{tabular}{rl@{\qquad}lr@{\ $\times$\ }l@{}}
\toprule
No. & Real Weil polynomial $h$                                                          & Argument      & Splitting $h_1$                & $h_2$\\
\midrule
 1. & $x^8 + 19x^7 + 152x^6 + 664x^5 + 1712x^4 + 2608x^3 + 2176x^2 + {\p}768x$          & \qquad --- \\
 2. & $x^8 + 19x^7 + 152x^6 + 664x^5 + 1713x^4 + 2618x^3 + 2212x^2 + {\p}824x + {\p}32$ & Resultant $1$ & $(x + 2)^3 (x + 4)$            & $(x^4 + 9x^3 + 26x^2 + 24x + 1)$                         \\
 3. & $x^8 + 19x^7 + 152x^6 + 664x^5 + 1713x^4 + 2619x^3 + 2220x^2 + {\p}844x + {\p}48$ & Resultant $1$ & $(x + 2)^2 (x + 4)$            & $(x + 3) (x^4 + 8x^3 + 20x^2 + 16x + 1)$                 \\
 4. & $x^8 + 19x^7 + 152x^6 + 664x^5 + 1714x^4 + 2629x^3 + 2256x^2 + {\p}900x + {\p}80$ & Resultant $1$ & $(x^2 + 5x + 5)$               & $(x + 2)^2 (x + 4) (x^3 + 6x^2 + 9x + 1)$                \\
 5. & $x^8 + 19x^7 + 152x^6 + 664x^5 + 1715x^4 + 2640x^3 + 2299x^2 + {\p}970x +    120$ & Resultant $1$ & $(x + 3)(x^2 + 5x + 5)$        & $(x + 2)(x + 4)(x^3 + 5x^2 + 6x + 1)$                    \\
 6. & $x^8 + 19x^7 + 152x^6 + 664x^5 + 1715x^4 + 2641x^3 + 2308x^2 + {\p}996x +    144$ & Resultant $2$ & $(x + 2)^2 (x^2 + 4x + 1)$     & $(x + 1)(x + 3)^2(x + 4)$                                \\
 7. & $x^8 + 19x^7 + 152x^6 + 664x^5 + 1715x^4 + 2642x^3 + 2317x^2 +    1022x +    168$ & Resultant $1$ & $(x + 2)(x + 4)(x^2 + 3x + 1)$ & $(x + 3) (x^3 + 7x^2 + 14x + 7)$                         \\
 8. & $x^8 + 19x^7 + 152x^6 + 664x^5 + 1716x^4 + 2650x^3 + 2335x^2 +    1025x +    150$ & Resultant $1$ & $(x^2 + 5x + 5)^2$             & $(x + 2)(x + 3)(x^2 + 4x + 1)$                           \\
 9. & $x^8 + 19x^7 + 152x^6 + 664x^5 + 1716x^4 + 2651x^3 + 2343x^2 +    1045x +    165$ & Resultant $1$ & $(x + 3)$                      & $(x^2 + 5x + 5)(x^5 + 11x^4 + 44x^3 + 77x^2 + 55x + 11)$ \\
10. & $x^8 + 19x^7 + 152x^6 + 664x^5 + 1716x^4 + 2652x^3 + 2351x^2 +    1065x +    180$ & S.s. factor   & $(x+4)$                        & $(x + 1)(x + 3)^2(x^2 + 3x + 1)(x^2 + 5x + 5)$           \\
11. & $x^8 + 19x^7 + 152x^6 + 664x^5 + 1716x^4 + 2652x^3 + 2352x^2 +    1071x +    189$ & Resultant $1$ & $(x^3 + 7x^2 + 14x + 7)$       & $(x + 3)^2(x^3 + 6x^2 + 9x + 3)$                         \\
12. & $x^8 + 19x^7 + 152x^6 + 664x^5 + 1716x^4 + 2652x^3 + 2352x^2 +    1072x +    192$ & Resultant $1$ & $(x + 3)$                      & $(x + 2)^2 (x + 4) (x^2 + 4x + 2)^2$                     \\
13. & $x^8 + 19x^7 + 152x^6 + 665x^5 + 1726x^4 + 2684x^3 + 2376x^2 +    1024x +    128$ & Resultant $2$ & $(x + 2)^3$                    & $(x + 4)^2 (x^3 + 5x^2 + 6x + 1)$                        \\
14. & $x^8 + 19x^7 + 152x^6 + 665x^5 + 1727x^4 + 2694x^3 + 2412x^2 +    1080x +    160$ & Resultant $1$ & $(x^2 + 5x + 5)$               & $(x + 2)^3 (x + 4) (x^2 + 4x + 1)$                       \\
15. & $x^8 + 19x^7 + 152x^6 + 665x^5 + 1727x^4 + 2695x^3 + 2420x^2 +    1100x +    176$ & Resultant $1$ & $(x + 2)^2 (x + 4)$            & $(x^5 + 11x^4 + 44x^3 + 77x^2 + 55x + 11)$               \\
16. & $x^8 + 19x^7 + 152x^6 + 665x^5 + 1727x^4 + 2696x^3 + 2428x^2 +    1120x +    192$ & Resultant $2$ & $(x + 2)^2 (x + 3)$            & $(x + 1)(x + 4)^2(x^2 + 3x + 1)$                         \\
17. & $x^8 + 19x^7 + 152x^6 + 665x^5 + 1728x^4 + 2705x^3 + 2455x^2 +    1150x +    200$ & Resultant $2$ & $(x + 2)$                      & $(x + 4)(x^2 + 3x + 1)(x^2 + 5x + 5)^2$                  \\
18. & $x^8 + 19x^7 + 152x^6 + 665x^5 + 1728x^4 + 2706x^3 + 2463x^2 +    1170x +    216$ & Resultant $2$ & $(x + 1)(x + 4)$               & $(x + 2)(x + 3)^2(x^3 + 6x^2 + 9x + 3)$                  \\
19. & $x^8 + 19x^7 + 152x^6 + 665x^5 + 1729x^4 + 2715x^3 + 2490x^2 +    1200x +    225$ & Resultant $1$ & $(x^2 + 5x + 5)^2$             & $(x + 3)(x^3 + 6x^2 + 9x + 3)$                           \\
20. & $x^8 + 19x^7 + 152x^6 + 665x^5 + 1729x^4 + 2716x^3 + 2499x^2 +    1225x +    245$ & Resultant $1$ & $(x^3 + 7x^2 + 14x + 7)^2$     & $(x^2 + 5x + 5)$                                         \\
21. & $x^8 + 19x^7 + 152x^6 + 665x^5 + 1729x^4 + 2717x^3 + 2506x^2 +    1239x +    252$ & Resultant $1$ & $(x^3 + 7x^2 + 14x + 7)$       & $(x + 1)^2 (x + 3)^2 (x + 4)$                            \\
22. & $x^8 + 19x^7 + 152x^6 + 666x^5 + 1740x^4 + 2759x^3 + 2568x^2 +    1260x +    240$ & Resultant $1$ & $(x + 2)^2 (x + 4)$            & $(x^2 + 5x + 5) (x^3 + 6x^2 + 9x + 3)$                   \\
23. & $x^8 + 19x^7 + 152x^6 + 666x^5 + 1741x^4 + 2770x^3 + 2611x^2 +    1330x +    280$ & Resultant $1$ & $(x + 1)(x + 2)(x + 4)$        & $(x^2 + 5x + 5)(x^3 + 7x^2 + 14x + 7)$                   \\
24. & $x^8 + 19x^7 + 152x^6 + 666x^5 + 1741x^4 + 2771x^3 + 2618x^2 +    1344x +    288$ & Resultant $2$ & $(x + 2)(x + 3)^2$             & $(x + 1)^3(x + 4)^2$                                     \\
25. & $x^8 + 19x^7 + 152x^6 + 666x^5 + 1742x^4 + 2780x^3 + 2645x^2 +    1375x +    300$ & Resultant $1$ & $(x^2 + 5x + 5)^2$             & $(x + 1)^2 (x + 3) (x + 4)$                              \\
26. & $x^8 + 19x^7 + 152x^6 + 667x^5 + 1753x^4 + 2824x^3 + 2724x^2 +    1440x +    320$ & Resultant $1$ & $(x^2 + 5x + 5)$               & $(x + 1)^2 (x + 2)^2 (x + 4)^2$                          \\
\bottomrule
\end{tabular}
\end{sidewaystable}

As the table indicates, all but the first of the $26$ possible real Weil 
polynomials can be eliminated from consideration by using one of the arguments
mentioned above. The entries that are eliminated by the resultant~$1$ argument
or the supersingular factor argument need no explanation beyond the 
specification of the splitting $h = h_1 h_2$ that makes the argument work. The
entries that are eliminated by the resultant~$2$ argument do require some more
explanation, to indicate why there would be a problem with a curve with real 
Weil polynomial $h$ being a double cover of a curve with real Weil polynomial
$h_1$ or~$h_2$.

The Riemann--Hurwitz formula shows that a genus-$8$ curve cannot be a double
cover of a curve of genus $5$ or larger. That fact is enough to show that for
table entries 13, 16, 17, 18, and 24, it is not possible for a curve with real
Weil polynomial $h$ to be a double cover of a curve with real Weil 
polynomial~$h_2$.

If a genus-$8$ curve with $24$ rational points is a double cover of a curve~$D$,
then the curve $D$ must have at least $12$ rational points. This shows that for
table entries 13, 17, and 18, it is not possible for a curve with real Weil 
polynomial $h$ to be a double cover of a curve with real Weil polynomial~$h_1$.

For table entries 16 and 24, there is no curve with real Weil polynomial equal 
to~$h_1$, because of the resultant $1$ argument.

The only remaining ``resultant $2$'' entry on the table is number 6. For that
entry, the supersingular factor argument shows there is no curve with real Weil
polynomial $h_2$. Next, we check that a genus-$4$ curve $D$ with real Weil
polynomial $h_1$ must have $13$ rational points. We also check that a genus-$8$
curve $C$ with real Weil polynomial $h$ has no places of degree~$2$. Therefore,
if we have a double cover $C\to D$, we see that $11$ rational points of $D$ 
split and $2$ ramify, in order to produce the $24$ rational points on~$C$. But
the ramification of the cover $C\to D$ is necessarily wild, so the 
Riemann--Hurwitz formula shows that there can be at most one point of $C$ that 
ramifies in the cover. This contradiction shows that the isogeny class for table
entry 6 does not contain a Jacobian.

Thus, the only possible real Weil polynomial for a genus-$8$ curve $C$ over 
$\BF_4$ with $24$ points is the one given for the first entry in 
Table~\ref{Table:26}, namely
\[
h = x^8 + 19x^7 + 152x^6 + 664x^5 + 1712x^4 + 2608x^3 + 2176x^2 + 768x.
\]
Note that we can write $h = h_1 h_2$, where $h_1 = x + 3$ and 
$h_2 = x(x+2)^4(x+4)^2$. Since $h_1$ is the real Weil polynomial of the unique
elliptic curve $E$ over $\BF_4$ with $8$ points, and since the resultant of 
$h_1$ and $h_2$ is $\pm 3$, Propositions~2.5 and~2.8 of~\cite{HoweLauter2012}
show that there exists a degree-$3$ map from $C$ to $E$. Let 
$\varphi\colon C\to E$ be such a triple cover. The remainder of the proof will
focus on this map~$\varphi$. First we show that $\varphi$ is not a Galois cover.

Suppose, to obtain a contradiction, that the triple cover $\varphi$ is 
Galois. Then all ramification in the cover is tame, and the Riemann--Hurwitz 
formula shows that seven geometric points of $E$ ramify.

The real Weil polynomial for $C$ shows that $C$ has $24$ places of degree~$1$ 
and no places of degree~$2$ or~$3$. Since $E$ has $8$ places of degree~$1$, all 
of them must split in the extension in order to account for the $24$ degree-$1$ 
places on~$C$, so no degree-$1$ place of $E$ ramifies. No places of $E$ of 
degree~$2$ or~$3$ ramify either, because $C$ has no places of degree~$2$ or~$3$.
Therefore, the ramification divisor on $E$ consists of a single place of 
degree~$7$. 

Every place of degree~$7$ on $C$ lies over a place of degree~$7$ on $E$, so the
number of degree-$7$ places on $C$ is equal to three times the number of 
degree-$7$ places on $E$ that split, plus the number of degree-$7$ places on $E$
that ramify. Therefore, the number of degree-$7$ places of $C$ must be $1$ 
modulo~$3$. However, the real Weil polynomial $h$ tells us that $C$ has $2496$
places of degree~$7$, and this number is $0$ modulo~$3$. This contradiction 
shows that $\varphi$ cannot be a Galois cover.

Let $k(E)$ and $k(C)$ be the function fields of $E$ and $C$, and view $k(E)$ as 
a degree-$3$ subfield of $k(C)$ via the embedding $\varphi^*$. Let $M$ be the 
Galois closure of $k(C)$ over $k(E)$, and let $L$ be the quadratic resolvent 
field. Then we have a field diagram
\[
\xymatrix@=1em{
                    & M\ar@{-}[ddrr]^3\ar@{-}[dl]_2 &      &                \\
k(C)\ar@{-}[ddrr]^3 &                               &      &                \\
                    &                               &      & L\ar@{-}[dl]_2 \\
                    &                               & k(E) &                \\
}
\]
where $M/k(E)$ is a Galois extension with group $S_3$.

Consider how a place $P$ of $E$ decomposes in $M$. There are six possible 
combinations of decomposition group and inertia group; these six possibilities
are listed in Table~\ref{Table:splitting}, together with the splitting behavior
in $C$ determined by the combination, and the associated contribution to the
degrees of the different $\different_C$ of $k(C)/k(E)$ and the different 
$\different_L$ of~$L/k(E)$.

\begin{table}[tb]
\centering
\caption{
   The possible decomposition and inertia groups in $M/k(E)$ of a place $P$ of
   $E$, together with associated data. Here $C_i$ denotes a cyclic group of 
   order~$i$. The columns labeled $Q_1$, $Q_2$, and $Q_3$ give the ramification
   index $e$ and residue class field degree $f$ of the (one, two, or three) 
   places of $C$ lying over~$P$, and the final two columns give the contribution
   that the places over $P$ make to the degrees of the differents of $k(C)/k(E)$
   and of $L/k(E)$. The symbol $m_P$ indicates an integer depending on $P$ that
   is computed from the higher ramification groups of $P$; we know that 
   $m_P\ge 2$ for every place $P$ with inertia group $C_2$.
}
\label{Table:splitting}
\begin{tabular}{ccccccccccccrr}
\toprule
\multicolumn{1}{c}{Decomposition}&\multicolumn{1}{c}{Inertia}&\hbox to 1em{}&\multicolumn{2}{c}{$Q_1$}&&\multicolumn{2}{c}{$Q_2$}&&\multicolumn{2}{c}{$Q_3$}&\hbox to 1em{}&\multicolumn{2}{c}{\quad Contribution to:}           \\
\multicolumn{1}{c}{group}        &\multicolumn{1}{c}{group}  &              & $e_1$ & $f_1$           && $e_2$ & $f_2$           && $e_3$ & $f_3$           &              & $\degree\different_C$ & $\degree\different_L$ \\
\midrule
$S_3$ & $C_3$ && $3$ & $1$ &&     &     &&     &     && $2\deg P$   & $0$          \\
$C_3$ & $C_3$ && $3$ & $1$ &&     &     &&     &     && $2\deg P$   & $0$          \\
$C_3$ & $C_1$ && $1$ & $3$ &&     &     &&     &     && $0$         & $0$          \\
$C_2$ & $C_2$ && $2$ & $1$ && $1$ & $1$ &&     &     && $m_P\deg P$ & $m_P \deg P$ \\
$C_2$ & $C_1$ && $1$ & $2$ && $1$ & $1$ &&     &     && $0$         & $0$          \\
$C_1$ & $C_1$ && $1$ & $1$ && $1$ & $1$ && $1$ & $1$ && $0$         & $0$          \\
\bottomrule
\end{tabular}
\end{table}

As we noted earlier, $C$ has $24$ places of degree~$1$ and no places of 
degree~$2$ or~$3$, and every place of degree~$1$ on $E$ must split completely
in~$C$. Each degree-$1$ place $P$ of $E$ must also split completely in $M$, and
in particular the places of $M$ over $P$ have residue class field degree~$1$. 
This shows that $L/k(E)$ is not a constant field extension, so $L$ and $M$ are 
function fields of geometrically irreducible curves over $\BF_4$; let $F$ and 
$D$ be these curves, so that $L = k(F)$ and $M = k(D)$.

We also see that once again, no place of $E$ of degree~$1$, $2$, or~$3$ can 
ramify in~$C$. The Riemann--Hurwitz formula says that the degree of 
$\different_C$ is $14$. Each place $P$ of $E$ that ramifies in $C$ contributes
at least $2 \deg P\ge 8$ to $\degree\different_C$, so we see that exactly one 
place must ramify. That place $P$ must have degree~$7$, and if the ramification
is wild then we must have $m_P = 2$.

Suppose the ramifying place $P$ has inertia group $C_3$. Then $L/k(E)$ is 
unramified, so $M/k(C)$ is unramified, so the curve $D$ has genus~$15$. Since
every rational point of $E$ splits completely in~$D$, we see that $D$ has $48$
rational points. But the Oesterl\'e bound for a curve of genus $15$ over $\BF_4$
is~$37$, a contradiction. Therefore the ramifying place $P$ must have inertia 
group~$C_2$.

Since the only ramifying place has inertia group~$C_2$, 
Table~\ref{Table:splitting} shows that 
$\degree\different_L = \degree\different_C = 14$, and it follows from the 
Riemann--Hurwitz formula that the genus of the curve $F$ is~$8$.

Since $C$ has no places of degree $2$ or~$3$, Table~\ref{Table:splitting} shows
that the places of degree $2$ and~$3$ on $E$ must have decomposition group $C_3$
in $M/k(E)$, so they split in $L/k(E)$. Earlier we showed that the degree-$1$
places of $E$ split completely in~$M$, and hence also in~$L$. Since $E$ has $8$
degree-$1$ places, $4$ degree-$2$ places, and $16$ degree-$3$ places, it follows
that $F$ has $16$ degree-$1$ places, $8$ degree-$2$ places, and $32$ degree-$3$
places.

The Magma code that accompanies~\cite{HoweLauter2012} includes an implementation
of Lauter's algorithm~\cite{Lauter2000} for enumerating all Weil polynomials $f$
of $g$-dimensional isogeny classes over $\BF_q$ with $P_n(f)\ge 0$ for all $n$ 
and with $P_1(f)$ equal to a given integer $N$. This can be easily adapted to 
enumerate all Weil polynomials $f$ of $8$-dimensional isogeny classes over 
$\BF_4$ with $P_n(f)\ge 0$ for all $n$ and with $P_1(f) = 16$, $P_2(f) = 8$,
and $P_3(f) = 32$. We find that there are $44$ such Weil polynomials; the 
associated real Weil polynomials are listed in Table~\ref{Table:realWPs}.

\begin{table}[tb]
\centering
\caption{
   Possible real Weil polynomials for a genus-$8$ curve over $\BF_4$ that has
   $16$ degree-$1$ places, $8$ degree-$2$ places, and $32$ degree-$3$ places.
}
\label{Table:realWPs}
\scriptsize
\begin{tabular}{l@{\qquad\qquad}l}
\toprule
$x^8 + 11x^7 + 36x^6 + 12x^5 - 96x^4 - 64x^3                               $ & $x^8 + 11x^7 + 36x^6 + 12x^5 - 93x^4 - 40x^3 +     52x^2 +     15x - {\p}4 $ \\
$x^8 + 11x^7 + 36x^6 + 12x^5 - 96x^4 - 64x^3 + {\p\p}x^2 +  {\p}4x         $ & $x^8 + 11x^7 + 36x^6 + 12x^5 - 93x^4 - 40x^3 +     52x^2 +     16x         $ \\
$x^8 + 11x^7 + 36x^6 + 12x^5 - 95x^4 - 56x^3 +     16x^2                   $ & $x^8 + 11x^7 + 36x^6 + 12x^5 - 93x^4 - 39x^3 +     56x^2 +     15x - {\p}4 $ \\
$x^8 + 11x^7 + 36x^6 + 12x^5 - 95x^4 - 56x^3 +     17x^2 +  {\p}4x         $ & $x^8 + 11x^7 + 36x^6 + 12x^5 - 93x^4 - 39x^3 +     56x^2 +     16x         $ \\
$x^8 + 11x^7 + 36x^6 + 12x^5 - 95x^4 - 56x^3 +     19x^2 +     12x         $ & $x^8 + 11x^7 + 36x^6 + 12x^5 - 93x^4 - 39x^3 +     57x^2 +     19x - {\p}4 $ \\
$x^8 + 11x^7 + 36x^6 + 12x^5 - 94x^4 - 49x^3 +     28x^2                   $ & $x^8 + 11x^7 + 36x^6 + 12x^5 - 92x^4 - 33x^3 +     57x^2 -     12x         $ \\
$x^8 + 11x^7 + 36x^6 + 12x^5 - 94x^4 - 48x^3 +     33x^2 +  {\p}4x         $ & $x^8 + 11x^7 + 36x^6 + 12x^5 - 92x^4 - 33x^3 +     58x^2 -  {\p}8x         $ \\
$x^8 + 11x^7 + 36x^6 + 12x^5 - 94x^4 - 48x^3 +     35x^2 +     11x - {\p}4 $ & $x^8 + 11x^7 + 36x^6 + 12x^5 - 92x^4 - 33x^3 +     59x^2 -  {\p}4x         $ \\
$x^8 + 11x^7 + 36x^6 + 12x^5 - 94x^4 - 48x^3 +     35x^2 +     12x         $ & $x^8 + 11x^7 + 36x^6 + 12x^5 - 92x^4 - 33x^3 +     60x^2 - {\p\p}x - {\p}4 $ \\
$x^8 + 11x^7 + 36x^6 + 12x^5 - 94x^4 - 47x^3 +     40x^2 +     15x - {\p}4 $ & $x^8 + 11x^7 + 36x^6 + 12x^5 - 92x^4 - 33x^3 +     60x^2                   $ \\
$x^8 + 11x^7 + 36x^6 + 12x^5 - 94x^4 - 47x^3 +     40x^2 +     16x         $ & $x^8 + 11x^7 + 36x^6 + 12x^5 - 92x^4 - 33x^3 +     61x^2 +  {\p}4x         $ \\
$x^8 + 11x^7 + 36x^6 + 12x^5 - 94x^4 - 47x^3 +     41x^2 +     20x         $ & $x^8 + 11x^7 + 36x^6 + 12x^5 - 92x^4 - 32x^3 +     64x^2 - {\p\p}x - {\p}4 $ \\
$x^8 + 11x^7 + 36x^6 + 12x^5 - 94x^4 - 47x^3 +     43x^2 +     29x + {\p}4 $ & $x^8 + 11x^7 + 36x^6 + 12x^5 - 92x^4 - 32x^3 +     64x^2                   $ \\
$x^8 + 11x^7 + 36x^6 + 12x^5 - 93x^4 - 41x^3 +     43x^2 -  {\p}4x         $ & $x^8 + 11x^7 + 36x^6 + 12x^5 - 92x^4 - 32x^3 +     65x^2 +  {\p}2x - {\p}8 $ \\
$x^8 + 11x^7 + 36x^6 + 12x^5 - 93x^4 - 41x^3 +     44x^2                   $ & $x^8 + 11x^7 + 36x^6 + 12x^5 - 92x^4 - 32x^3 +     65x^2 +  {\p}3x - {\p}4 $ \\
$x^8 + 11x^7 + 36x^6 + 12x^5 - 93x^4 - 40x^3 +     48x^2                   $ & $x^8 + 11x^7 + 36x^6 + 12x^5 - 92x^4 - 32x^3 +     67x^2 +  {\p}9x -    12 $ \\
$x^8 + 11x^7 + 36x^6 + 12x^5 - 93x^4 - 40x^3 +     49x^2 +  {\p}3x - {\p}4 $ & $x^8 + 11x^7 + 36x^6 + 12x^5 - 92x^4 - 32x^3 +     68x^2 +     12x -    16 $ \\
$x^8 + 11x^7 + 36x^6 + 12x^5 - 93x^4 - 40x^3 +     49x^2 +  {\p}4x         $ & $x^8 + 11x^7 + 36x^6 + 12x^5 - 91x^4 - 26x^3 +     68x^2 -     16x         $ \\
$x^8 + 11x^7 + 36x^6 + 12x^5 - 93x^4 - 40x^3 +     50x^2 +  {\p}7x - {\p}4 $ & $x^8 + 11x^7 + 36x^6 + 12x^5 - 91x^4 - 26x^3 +     69x^2 -     12x         $ \\
$x^8 + 11x^7 + 36x^6 + 12x^5 - 93x^4 - 40x^3 +     51x^2 +     10x - {\p}8 $ & $x^8 + 11x^7 + 36x^6 + 12x^5 - 91x^4 - 25x^3 +     72x^2 -     16x         $ \\
$x^8 + 11x^7 + 36x^6 + 12x^5 - 93x^4 - 40x^3 +     51x^2 +     11x - {\p}4 $ & $x^8 + 11x^7 + 36x^6 + 12x^5 - 91x^4 - 25x^3 +     73x^2 -     13x - {\p}4 $ \\
$x^8 + 11x^7 + 36x^6 + 12x^5 - 93x^4 - 40x^3 +     51x^2 +     12x         $ & $x^8 + 11x^7 + 36x^6 + 12x^5 - 90x^4 - 19x^3 +     78x^2 -     24x         $ \\
\bottomrule
\end{tabular}
\end{table}

Our genus-$8$ curve $F$ is a double cover of the elliptic curve $E$, and 
therefore the real Weil polynomial for $E$ --- namely, $x+3$ --- should divide
that of~$F$. But we check that none of the $44$ real Weil polynomials listed in
Table~\ref{Table:realWPs} is divisible by $x + 3$. This contradiction shows that
the map $\varphi\colon C\to E$ cannot exist, so our hypothetical genus-$8$ curve
over $\BF_4$ with $24$ points cannot exist either.
\end{proof}

We note that the Magma routines accompanying~\cite{HoweLauter2012} automatically
produce an argument, similar to the first half of the proof of 
Theorem~\ref{T:upper}, that shows that a genus-$8$ curve over $\BF_4$ with $24$
points is a triple cover of an elliptic curve with $8$ points.

%%%%%%%%%%%%%%%%%%%%%%%%%%%%%%%%%%%%%%%%%%%%%%%%%%%%%%%%%%%%%%%%%%%%%%%%%%%%%%%%
\section{Improving the lower bound}
\label{S:example}

In this section we provide an example that proves the lower bound in 
Theorem~\ref{T:intro}.

\begin{theorem}
\label{T:example}
The curve over $\BF_4$ defined by the two equations 
\[
y^2 + (x^3 + x + 1)y = x^6 + x^5 + x^4 + x^2
\text{\qquad and\qquad}
z^3 = (x+1)y + x^2
\]
has genus~$8$ and has $22$ rational points.
\end{theorem}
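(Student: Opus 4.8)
The plan is to verify Theorem~\ref{T:example} by direct computation, establishing the three claims in turn: that the given equations define a smooth, geometrically irreducible curve; that its genus is~$8$; and that it has exactly $22$ rational points over $\BF_4$. The first equation defines a genus-$2$ curve $X$ (a hyperelliptic curve with the standard Weierstrass model $y^2 + g(x)y = f(x)$ where $\degree g = 3$ and $\degree f = 6$), and the second equation exhibits our curve $C$ as a degree-$3$ Kummer-type cover of $X$, cut out by $z^3 = u$ where $u = (x+1)y + x^2$ is a function on $X$. First I would confirm that $X$ is a genuine genus-$2$ curve by checking that the affine model is nonsingular in the relevant chart and that the discriminant condition holds; in characteristic~$2$ this amounts to verifying that $g(x) = x^3 + x + 1$ and the right-hand side have no common obstruction to smoothness.

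\emph{Computing the genus.} The heart of the genus computation is an application of the Riemann--Hurwitz formula to the cover $\pi\colon C \to X$, or equivalently the Hurwitz formula for the degree-$3$ Kummer extension $k(C) = k(X)(z)$ with $z^3 = u$. Since $q = 4 \equiv 1 \pmod 3$, the field $\BF_4$ contains the cube roots of unity, so this is a cyclic (Kummer) extension of degree~$3$, and its ramification is tame. The ramification occurs exactly at the zeros and poles of $u$ whose order is not divisible by~$3$, and each such place contributes a fully ramified point with different exponent~$2$. Thus I would compute the divisor $\divisor(u)$ of the function $u = (x+1)y + x^2$ on $X$, reduce each coefficient modulo~$3$, and count the ramified places weighted by degree; plugging $\degree \different_C = 2 r$ (where $r$ is the number of ramified geometric points) into $2g_C - 2 = 3(2 g_X - 2) + \degree \different_C = 3 \cdot 2 + \degree \different_C$ should yield $g_C = 8$, forcing $\degree \different_C = 6$, i.e.\ exactly three ramified geometric points (or the appropriate weighted count over places).

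\emph{Counting the points.} To count $C(\BF_4)$ I would work place-by-place over the rational points of $X$ and over the point at infinity. For each of the finitely many $\BF_4$-rational points $P$ of $X$, I evaluate $u(P)$ and determine how many $z \in \BF_4$ solve $z^3 = u(P)$: since cubing is a bijection on $\BF_4$ (because $\gcd(3, |\BF_4^\times|) = \gcd(3,3)\ne 1$ — here one must be careful, as $3 \mid |\BF_4^\times| = 3$, so cubing is the zero-map on $\BF_4^\times$ in the sense that $x^3 = 1$ for all $x \ne 0$), each nonzero value of $u(P)$ that happens to be a cube yields three points and each non-cube yields none, while $u(P) = 0$ yields one (ramified) point. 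I would therefore tabulate $u(P)$ over all rational $P$ on $X$, classify each value, and sum the contributions, not forgetting the behaviour at the place(s) at infinity of $X$ and any ramified places that are $\BF_4$-rational. The total should come to~$22$.

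\textbf{The main obstacle} I anticipate is the careful bookkeeping in characteristic~$2$ with the third-power map on $\BF_4$: because $|\BF_4^\times| = 3$, the cube map is highly degenerate (every nonzero cube equals~$1$), so the splitting behaviour of places is governed by whether $u(P)$ lands in the unique nonzero cube class, which is the whole of $\BF_4^\times$ only after one sorts out the subgroup structure correctly. Getting the divisor of $u$ exactly right at infinity and at the points where $u$ vanishes or has a pole — and hence getting both the different degree (for the genus) and the ramified/split dichotomy (for the point count) consistent — is the delicate step; a sign error or a miscounted multiplicity there would throw off both $g_C$ and $\#C(\BF_4)$ simultaneously. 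Once the divisor $\divisor(u)$ is pinned down, both the genus~$8$ claim and the count of~$22$ follow mechanically, and the two computations serve as cross-checks on one another.
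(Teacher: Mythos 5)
Your strategy is the same as the paper's: view the curve $C$ as a degree-$3$ Kummer cover of the genus-$2$ curve $D$ defined by the first equation, pin down $\divisor(u)$ for $u=(x+1)y+x^2$, get the genus from Riemann--Hurwitz using tame ramification at the places where $v_P(u)\not\equiv 0\pmod 3$, and count rational points place by place using the cube classes in $\BF_4$. However, two of the numbers you predict are wrong, and one of them reflects a genuine gap in your point-counting rule rather than a mere slip. For the genus: genus $8$ forces $\degree\different_C = (2\cdot 8-2) - 3(2\cdot 2 - 2) = 14-6 = 8$, not $6$, so there are \emph{four} ramified geometric points, not three; your value $\degree\different_C = 6$ would give genus $7$. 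And indeed the computation you propose yields $\divisor u = 6P_0 + P_1 + P_2 - 4Q_1 - 4Q_2$, where $P_0=(0,0)$, $P_1=(\omega,1)$, $P_2=(\omega^2,1)$, and $Q_1,Q_2$ are the two rational points at infinity, so the four rational places $P_1,P_2,Q_1,Q_2$ ramify and the formula closes up correctly as $2g_C-2 = 6+8$.

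The more serious problem is your rule that ``$u(P)=0$ yields one (ramified) point.'' This fails at $P_0$: there $u$ vanishes to order $6$, which is divisible by $3$, so $P_0$ is \emph{unramified}; moreover $x$ is a uniformizer at $P_0$ and $u/x^6 = 1 + O(x)$ is a one-unit, hence a cube in the completed local ring by Hensel's lemma (since $3$ is invertible in characteristic $2$), so $P_0$ splits completely and contributes $3$ rational points, not $1$. Followed literally, your recipe gives $5\cdot 3$ points from the five rational points where $u=1$, plus one point each from $P_0,P_1,P_2,Q_1,Q_2$, for a total of $20$, not $22$; the correct count is $5\cdot 3 + 3 + 4 = 22$. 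This local-cube verification at $P_0$ is precisely the step the paper carries out explicitly. In general, at an unramified rational place with $v_P(u)\ne 0$ you must strip off the appropriate power of a uniformizer and test whether the remaining unit is a cube locally --- classifying places only by ``$u(P)$ a nonzero cube, a non-cube, or zero,'' as you do, is not a well-defined dichotomy at the zeros and poles of $u$.
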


\begin{proof}
Let $D$ be the genus-$2$ curve $y^2 + (x^3 + x + 1)y = x^6 + x^5 + x^4 + x^2$
and let $C$ be the curve in the theorem, so that $C$ is the degree-$3$ Kummer
extension of $D$ obtained by adjoining a cube root of the function 
$f = (x+1)y + x^2$. We check that 
\[
\divisor f = 6P_0 + P_1 + P_2 - 4Q_1 - 4Q_2,
\]
where $Q_1$ and $Q_2$ are the two (rational) points at infinity on $D$, and 
where $P_0$, $P_1$, and $P_2$ are the points $(0,0)$, $(\omega,1)$, and 
$(\omega^2,1)$, where $\omega$ and $\omega^2$ are the elements of $\BF_4$ not
in~$\BF_2$. Thus, the points $P_1$, $P_2$, $Q_1$, and $Q_2$ ramify totally and
tamely in the cover $C/D$, and the Riemann--Hurwitz formula shows that $C$ has
genus~$8$.

The function $f$ evaluates to $1$ on the five rational points of $D$ that are
not in the support of $f$, so each of those five points splits completely 
in~$C$. The point $P_0$ also splits in $C$; the function $x$ is a uniformizer 
at~$P_0$, and we have $y = x^2 + x^3 + x^4 + x^5 + O(x^7)$, so 
$f = x^6 + O(x^7)$ and $f$ is locally a cube at $P_0$. Thus, six rational points
of $D$ split completely, and four rational points of $D$ ramify, leading to 
$6\cdot 3 + 4 = 22$ points on $C$.
\end{proof}

\bibliography{genus8F4}
\bibliographystyle{hplaindoi}
\end{document}